\numberwithin{equation}{section}  %%% equations numbers (A.B)
\DeclareMathAlphabet{\curly}{U}{rsfs}{m}{n}  %% curly font
\newtheorem{thm}{Theorem}[section]
\newtheorem{lem}[thm]{Lemma}
\newtheorem{pro}[thm]{Proposition}
\theoremstyle{definition}
\numberwithin{equation}{section}
\numberwithin{equation}{section}
\DeclareMathOperator{\li}{li}
\renewcommand{\pmod}[1]{\allowbreak\mkern7mu({\operator@font mod}\,\,#1)}
\newcommand{\bal}{\[\begin{aligned}}
\newcommand{\eal}{\end{aligned}\]}
\newcommand{\be}{\begin{equation}}
\newcommand{\ee}{\end{equation}}
\newcommand{\ssum}[1]{\sum_{\substack{#1}}}  %%% stacked sum
\renewcommand{\b}{\ensuremath{\beta}}
\newcommand{\del}{\ensuremath{\delta}}
\newcommand{\eps}{\ensuremath{\varepsilon}}
\newcommand{\g}{\gamma}
\renewcommand{\le}{\leqslant}
\renewcommand{\leq}{\leqslant}
\renewcommand{\ge}{\geqslant}
\renewcommand{\geq}{\geqslant}
\renewcommand{\(}{\left(}
\renewcommand{\)}{\right)}
\newcommand{\pfrac}[2]{\left(\frac{#1}{#2}\right)}  %%% frac with
\newcommand{\m}{\textup{meas}}
\newcommand{\re}{\textup{Re}}
\newcommand{\im}{\textup{Im}}
\begin{document}

\title[The prime number race and zeros of  $L$-functions off the critical line]{The prime number race and zeros of Dirichlet $L$-functions off the critical line. III}

\author{Kevin Ford}
\address{Department of Mathematics, University of Illinois at Urbana-Champaign,
1409 W. Green Street,
Urbana, IL, 61801}
\email{ford@math.uiuc.edu}

\author{Sergei Konyagin}
\address{Steklov Mathematical Institute, 8, Gubkin Street, Moscow, 119991,
Russia}
\email{konyagin@mi.ras.ru}

\author{Youness Lamzouri}
\address{Department of Mathematics, University of Illinois at Urbana-Champaign,
1409 W. Green Street,
Urbana, IL, 61801}
\email{lamzouri@math.uiuc.edu}

\begin{abstract}
We show, for any $q\ge 3$ and distinct reduced residues $a,b \pmod q$, the existence of certain hypothetical sets of
zeros of Dirichlet $L$-functions lying off the critical line implies that $\pi(x;q,a)<\pi(x;q,b)$ for
a set of real $x$ of asymptotic density 1.
\end{abstract}

\subjclass[2010]{Primary 11N13, 11M26}

\keywords{The Shanks-R\'enyi prime race problem, primes in arithmetic progressions, zeros of Dirichlet $L$-functions.}

\thanks{}

\maketitle

\section{Introduction}
For $(a,q)=1$, let $\pi(x;q,a)$ denote the number of primes $p\le x$ with $p\equiv
a\pmod{q}$.  The study of the relative magnitudes of the functions
$\pi(x;q,a)$ for a fixed $q$ and varying $a$ is known colloquially as
the ``prime race problem'' or ``Shanks-R\'enyi prime race problem''.
For a survey of problems and results on prime races, the reader may consult the papers
\cite{FK3} and \cite{GM}.  One basic problem is the study of
$P_{q;a_1\dots,a_r}$, the set of real numbers $x\geq 2$ such that $\pi(x;q,a_1)>\cdots>\pi(x;q,a_r)$.
It is generally believed that all sets $P_{q;a_1\dots,a_r}$ are unbounded.
Assuming the Generalized Riemann Hypothesis for Dirichlet $L$-functions modulo $q$ (GRH$_q$)
and that the nonnegative imaginary parts of zeros of these $L$-functions are linearly  independent over the rationals,
 Rubinstein and Sarnak \cite{RS} have shown for any $r$-tuple of reduced residue
classes $a_1,\ldots,a_r$ modulo $q$, that $P_{q;a_1,\ldots,q_r}$ has a positive logarithmic density (although the
density may be quite small in some cases).

 In \cite{FK} and \cite{FK2}, Ford and Konyagin investigated how possible violations of the Generalized
 Riemann Hypothesis (GRH) would affect prime number races.  In \cite{FK}, they proved that the
existence of certain sets of
zeros off the critical line would imply that some of the sets $P_{q;a_1,a_2,a_3}$ are bounded, giving a negative
 answer to the prime race problem with $r=3$.  Paper \cite{FK2} was devoted to similar questions for $r$-way prime
races with $r>3$.  One result from \cite{FK2} states that for any $q$, $r\le \phi(q)$ and set $\{a_1,\ldots,a_r\}$ of
reduced residues modulo $q$, the existence of certain hypothetical sets of zeros of Dirichlet $L$-functions modulo
$q$ implies that at most $r(r-1)$ of the sets $P_{q;\sigma(a_1),\ldots,\sigma(a_r)}$ are unbounded, $\sigma$ running over
all permutations of $\{a_1,\ldots,a_r\}$.

 In this paper, we  investigate the effect of zeros of $L$-functions lying off the critical line
for two way prime races.  This case is harder, since it is
 unconditionally proved that for certain races $\{q;a,b\}$ the set $P_{q;a, b}$ is unbounded.
For example, Littlewood \cite{Li} proved that $P_{4;3,1}$, $P_{4;1,3}$, $P_{3;1,2}$ and $P_{3;2,1}$ are unbounded.
Later Knapowski and Tur\`an (\cite{KT1}, \cite{KT2})  proved for many $q,a,b$ that $\pi(x;q,b)-\pi(x;q,a)$
changes sign infinitely often and more recently Sneed \cite{Sn} showed that $P_{q;a,b}$ is unbounded
for every $q\le 100$ and all possible pairs $(a,b)$.

 Nevertheless, we  prove that the existence of certain zeros off the critical line would imply that
the set $P_{q;a,b}$ has asymptotic density zero, in contrast with a conditional result of  Kaczorowski
\cite{Ka1} on GRH, which asserts that $P_{q;1,b}$ and $P_{q;b,1}$ have positive lower densities for all $(b,q)=1$.

Let $q\geq 3$ be a positive integer and $a,b$ be distinct reduced residues modulo $q$. Moreover, for any set $\mathcal{S}$ of real numbers we define
$\mathcal{S}(X)= \mathcal{S}\cap [2,X].$

%%%%%%%%%%%%%%%%%%%%%%%%%%%%%%%%%%%%%%%%%%%%%%%%%%%
%
%   Main Theorem
%
%%%%%%%%%%%%%%%%%%%%%%%%%%%%%%%%%%%%%%%%%%%%%%%%%%%

\begin{thm}\label{MainThm}
Let $q\ge 3$ and suppose that $a$ and $b$ are distinct reduced residues modulo $q$.
Let $\chi$ be a nonprincipal Dirichlet character with $\chi(a)\ne \chi(b)$, and put $\xi=\arg(\chi(a)-\chi(b))
\in [0,2\pi)$.
Suppose  $\frac12<\sigma<1$, $0<\del<\sigma-\frac12$,$A>0$, and $\mathcal{B}=\mathcal{B}(\xi, \sigma, \del, A)$ is a multiset
of complex numbers satisfying the conditions listed in Section \ref{sec:construction}.
If $L(\rho,\chi)=0$ for all $\rho\in \mathcal{B}$,
$L(s,\chi)$ has no other zeros in the region $\{ s : \re(s)\ge \sigma-\del, \im(s)\ge 0\}$, and for all other
nonprincipal characters $\chi'$ modulo $q$, $L(s,\chi')\ne 0$ in the region
$\{s: \re(s)\ge \sigma-\del, \im(s)\ge 0\}$, then $$\lim_{X\to\infty}\frac{\m(P_{q;a,b}(X))}{X}=0.$$
\end{thm}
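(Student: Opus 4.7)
The plan is to derive, via the explicit formula and character orthogonality, an identity expressing $\pi(x;q,b)-\pi(x;q,a)$ as an oscillatory sum indexed by $\mathcal{B}$ plus a negligible error, and then to use the structural conditions imposed on $\mathcal{B}$ in Section~\ref{sec:construction} to show that this main term is positive outside a set of $x$ of asymptotic density zero.

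First I would write
\[\pi(x;q,b)-\pi(x;q,a)=\frac{1}{\phi(q)}\sum_{\chi'\ne\chi_0}\bigl(\overline{\chi'(b)}-\overline{\chi'(a)}\bigr)\,\pi(x,\chi'),\qquad\pi(x,\chi'):=\sum_{p\le x}\chi'(p),\]
and convert each $\pi(x,\chi')$ to $\psi(x,\chi')/\log x$ via partial summation (absorbing the prime-power contribution into an $O(\sqrt x)$ term). The truncated explicit formula, shifted to the vertical line $\re(s)=\sigma-\del$, then yields $\psi(x,\chi')=O\bigl(x^{\sigma-\del}(\log qx)^2\bigr)$ for every $\chi'\notin\{\chi,\bar\chi\}$: the hypothesis combined with the conjugation symmetry $L(\bar s,\chi')=\overline{L(s,\overline{\chi'})}$ removes every zero of $L(s,\chi')$ from the entire half-plane $\re(s)\ge\sigma-\del$. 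For $\chi$ (and, when $\chi$ is complex, for $\bar\chi$ also) the same contour picks up precisely the residues at $\mathcal{B}$ (and at $\overline{\mathcal{B}}$ when $\chi=\bar\chi$), so that
\[\psi(x,\chi)+\mathbf{1}_{\bar\chi\ne\chi}\,\psi(x,\bar\chi)=-2\,\re\!\sum_{\rho\in\mathcal{B}}\frac{x^{\rho}}{\rho}+O\bigl(x^{\sigma-\del}(\log qx)^2\bigr).\]

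Substituting back, using $\overline{\chi(a)}-\overline{\chi(b)}=|\chi(a)-\chi(b)|\,e^{-i\xi}$, and dividing by $\log x$, I obtain in both the real and the complex character cases the unified reduction
\[\pi(x;q,b)-\pi(x;q,a)=\frac{2\,|\chi(a)-\chi(b)|}{\phi(q)\,\log x}\;\re\!\Bigl(e^{-i\xi}\!\!\sum_{\rho\in\mathcal{B}}\frac{x^{\rho}}{\rho}\Bigr)+O\!\left(x^{\sigma-\del}(\log x)^2\right).\]
After factoring out $x^\sigma$ (and absorbing any $\rho\in\mathcal{B}$ with $\re(\rho)<\sigma$ into the error), membership of $x$ in $P_{q;a,b}$ is essentially equivalent to
\[F(u):=\re\!\Bigl(e^{-i\xi}\!\!\sum_{\rho\in\mathcal{B}}\frac{e^{(\rho-\sigma)u}}{\rho}\Bigr),\qquad u=\log x,\]
failing to exceed a threshold of order $e^{-\del u/2}$. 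The defining conditions of $\mathcal{B}$ in Section~\ref{sec:construction} are engineered so that this failure happens only on a very thin exceptional set; I would invoke those conditions directly to control it.

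The main obstacle will be the density transfer from $u$-measure to $x$-measure. Because $dx=e^u\,du$, a set of $u$-density zero can still have positive $x$-density if it clusters near the upper endpoint $u=\log X$; to force $\m(P_{q;a,b}(X))/X\to 0$ one must show that the exceptional set on which $F(u)$ fails to dominate the error has measure $o(1)$ uniformly on every unit window $[U,U+1]$ as $U\to\infty$, not merely asymptotic density zero overall. This uniform-window control is exactly what distinguishes the density-one conclusion here from the positive-density results produced by Kronecker-type equidistribution in Rubinstein--Sarnak-style arguments, and its verification will hinge on the precise geometric placement and multiplicities of the zeros comprising $\mathcal{B}$, together with quantitative bounds on the associated exponential sum.
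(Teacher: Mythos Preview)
Your reduction via the explicit formula is essentially the same as the paper's Lemma~\ref{ExplicitFormula}, and your unified expression for $\pi(x;q,b)-\pi(x;q,a)$ is correct. But the proposal stops precisely where the real work begins. Two concrete problems:

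First, the parenthetical step ``absorbing any $\rho\in\mathcal{B}$ with $\re(\rho)<\sigma$ into the error'' is vacuous in the wrong direction: every $\rho\in\mathcal{B}$ has real part $\sigma-\delta_j<\sigma$, so that absorption would swallow the entire main term. The factors $x^{-\delta_j}$ are not errors to be discarded; together with the $1/\gamma_j$ from $1/\rho$ they produce the weight $\exp(-(\log x)/j^8-j^8)$ whose maximum in $j$ picks out a dominant window $|j-J|\le J^{3/4}$ with $J=\lfloor(\log x)^{1/16}\rfloor$. Without this localisation you cannot isolate a single scale on which to prove sign control.

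Second, and more fundamentally, ``I would invoke those conditions directly'' is not a mechanism. The specific multiplicities $m(k,j)=k(j^3+1-k)$ in Section~\ref{sec:construction} are chosen so that, after extracting the factor $1/\rho\approx 1/(ik\gamma_j)$, the $k$-sum for each fixed $j$ becomes exactly the Fej\'er kernel
\[
F_{\gamma_j,L}(x)=\sum_{k=1}^{L-1}(L-k)\cos(k\gamma_j\log x)=\frac{1}{2}\Bigl(\frac{\sin(L\gamma_j\log x/2)}{\sin(\gamma_j\log x/2)}\Bigr)^2-\frac{L}{2},\qquad L=j^3,
\]
while the phase shifts $\theta_j$ are tuned so that $i(\overline{\chi(a)}-\overline{\chi(b)})e^{i\theta_j\log x}$ is a positive real for $j$ near $J$. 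The Fej\'er identity shows $F_{\gamma_j,L}(x)\ge -L/4$ forces $\|\gamma_j\log x/2\pi\|\ll L^{-1/2}$, which immediately gives a measure bound $\ll X/\sqrt{L}$ directly in $x$-measure (Proposition~\ref{Fejer}). Summing over the $O(J)$ relevant $j$ yields an exceptional set of measure $O(X(\log X)^{-1/32})$; on the complement every $F_{\gamma_j,j^3}(x)\le -j^3/4$, and the sum in your $F(u)$ is manifestly negative. None of this structure is visible in your outline, and your identification of ``density transfer from $u$-measure to $x$-measure'' as the main obstacle is a misdiagnosis: the paper never passes through $u$-measure at all, and the uniform-window issue you raise is handled automatically by the direct $x$-measure bound from the Fej\'er kernel.
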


{\bf Remarks.}  Such $\chi$ exists whenever $a$ and $b$ are distinct modulo $q$.
The sets $\mathcal{B}$  have the property that any
$\rho\in \mathcal{B}$ has real part in $[\sigma-\delta,\sigma]$, imaginary part greater than $A$,
and multiplicity $O((\log \im(\rho))^{3/4})$ (that is, the multiplicities are much smaller than known bounds on the multiplicity of zeros of Dirichlet
$L$-functions).  The number of elements of $\mathcal{B}$ (counted with multiplicity) with
imaginary part less than $T$ is $O((\log T)^{5/4})$, and thus $\mathcal{B}$ is quite a ``thin'' set.
Also, we note that if $L(\b+i\g,\chi)=0$ then $L(\b-i\g,\overline{\chi})=0$, which is a consequence of the functional
equation for Dirichlet $L$-functions (See e.g. Ch. 9 of \cite{Da}).  The point of Theorem \ref{MainThm}
is that proving  $$\limsup_{X\to\infty}\frac{\m(P_{q;a,b}(X))}{X}>0$$ requires showing that the multiset
of zeros of $L(s,\chi)$ cannot contain any of the multisets $\mathcal{B}$.  This is beyond what is possible
with existing technology (see e.g. \cite{Iv} for the best known estimates for multiplicities of zeros).

Our method works as well for the difference $\pi(x)-\li(x)$, the error term in the prime number theorem.
Littlewood \cite{Li} established that this quantity changes sign infinitely often. Let $P_1$ be the set of
real numbers $x\geq 2$ such that $\pi(x)>\li(x)$.  In \cite{Ka2} Kaczorowski proved, assuming the Riemann Hypothesis,
that both $P_1$ and $\overline{P}_1$ have positive lower densities. Assuming the Riemann Hypothesis and that
the nonnegative imaginary parts of the zeros of the Riemann zeta function $\zeta(s)$ are linearly independent over the rationals,
 Rubinstein and Sarnak \cite{RS} have shown that $P_1$ has a positive logarithmic density
$\delta_1\approx 0.00000026$.  In contrast with these results we prove that the existence of certain zeros
 of $\zeta(s)$ off the critical line would imply that the set $P_1$ has asymptotic density zero
(or asymptotic density 1).

\begin{thm}\label{Thm2}
Suppose $\frac12<\sigma<1$, $0<\del<\sigma-\frac12$ and $A>0$.
(i) If $\xi=0$, $\mathcal{B}=\mathcal{B}(\xi, \sigma, \del, A)$ satisfies the conditions of Section \ref{sec:construction},
$\zeta(\rho)=0$ for all $\rho\in \mathcal{B}$,
and $\zeta(s)$ has no other zeros in the region $\{ s : \re(s)\ge \sigma-\del, \im(s)\ge 0\}$,  then
$$\lim_{X\to\infty}\frac{\m(P_1(X))}{X}=0.$$
(ii) If $\xi=\pi$, $\mathcal{B}$ satisfies the conditions of Section \ref{sec:construction},
 $\zeta(\rho)=0$ for all $\rho\in \mathcal{B}$,
and $\zeta(s)$ has no other zeros in the region $\{ s : \re(s)\ge \sigma-\del, \im(s)\ge 0\}$,  then
$$\lim_{X\to\infty}\frac{\m(P_1(X))}{X}=1.$$
\end{thm}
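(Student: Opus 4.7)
The plan is to follow the argument of Theorem~\ref{MainThm} essentially verbatim, with the Riemann zeta function replacing $L(s,\chi)$ and the classical explicit formula for $\pi(x)-\li(x)$ replacing the one for $\pi(x;q,b)-\pi(x;q,a)$. The role of the complex coefficient $\chi(a)-\chi(b)$, whose argument defines $\xi$ in Theorem~\ref{MainThm}, is here played by the real constant $1$, which is why only the two values $\xi=0$ and $\xi=\pi$ enter into the statement.

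First I would apply the classical explicit formula (see Chapters~17--18 of \cite{Da}) together with the hypothesis that the only zeros of $\zeta(s)$ in $\{\re(s)\ge \sigma-\del,\ \im(s)\ge 0\}$ are those in $\mathcal{B}$. After separating the contribution of zeros on the critical line (bounded by $O(x^{1/2}(\log x)^{C})$ via the Riemann--von Mangoldt formula) and the tail from zeros with $\re(\rho)<\sigma-\del$ (controlled using classical zero-density estimates), one obtains
\[
\pi(x)-\li(x) = -2\,\re\sum_{\rho\in\mathcal{B}}\li(x^\rho) + O\bigl(x^{1/2}(\log x)^{C}\bigr)
\]
uniformly for $x\in[2,X]$. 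Using $\li(x^\rho)=x^\rho/(\rho\log x)+O(x^{\re\rho}/(\log x)^2)$ together with the bound $|\{\rho\in\mathcal{B}:\im\rho\le T\}|=O((\log T)^{5/4})$ recorded in the remarks after Theorem~\ref{MainThm}, the dominant part reduces to
\[
S(x) := 2\,\re\sum_{\rho\in\mathcal{B}}\frac{x^\rho}{\rho\log x},
\]
which has order at most $x^\sigma/\log x$.

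The core step, which I would import directly from the proof of Theorem~\ref{MainThm}, is the distributional estimate that on $[2,X]$ outside an exceptional set of measure $o(X)$, the quantity $S(x)$ has a definite sign with $|S(x)|\gg x^\sigma/\log x$: namely $S(x)>0$ when $\xi=0$, and $S(x)<0$ when $\xi=\pi$. This estimate depends only on the abstract data $(\xi,\sigma,\del,A)$ specifying $\mathcal{B}$ in Section~\ref{sec:construction}, not on any analytic property of $L(s,\chi)$, so it transfers unchanged to the zeta function. Granting it, in the case $\xi=0$ we obtain $\pi(x)-\li(x)<0$ off a set of density zero, giving part~(i); in the case $\xi=\pi$ we obtain $\pi(x)-\li(x)>0$ off a set of density zero, giving part~(ii).

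The main obstacle---and the reason the construction of Section~\ref{sec:construction} must be invoked rather than reproved---is precisely this measure-theoretic concentration statement for the oscillatory sum $S(x)$. The surrounding manipulations (explicit formula, critical-line bound, asymptotic expansion of $\li(x^\rho)$) are standard in this setting. The construction is engineered so that, via a pigeonhole/Weyl-type argument on the phases $\{\gamma\log x\}_{\rho\in\mathcal{B}}$ together with the prescribed multiplicities, the complex sum $\sum_{\rho\in\mathcal{B}}x^\rho/\rho$ aligns in a direction whose real part has sign $\cos\xi$ on a subset of $[2,X]$ of measure $(1-o(1))X$; both parts of the theorem then follow from the single parameter flip $\xi=0\leftrightarrow\xi=\pi$.
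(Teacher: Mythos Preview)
Your plan matches the paper's exactly: the paper says the proof is ``nearly identical to the proof of Theorem~\ref{MainThm} in the case $q=4$'', using the zeta-function explicit formula stated in the Remark following Lemma~\ref{ExplicitFormula}. However, two of your quantitative assertions are wrong and would mislead if left as stated. First, the error term in the explicit formula is $O(x^{\sigma-\delta}\log^2 x)$ (take $\beta=\sigma-\delta$ in that Remark), not $O(x^{1/2}(\log x)^C)$; zero-density estimates cannot give the latter, since nothing in the hypotheses excludes zeros with real part just below $\sigma-\delta$, and these contribute essentially $x^{\sigma-\delta}$. Second, $|S(x)|$ on the good set is \emph{not} $\gg x^{\sigma}/\log x$: by \eqref{maximum} the dominant scale is $x^{\sigma-\delta_J}/\gamma_J\asymp x^{\sigma}\exp\bigl(-(2+o(1))\sqrt{\log x}\bigr)$, and the final lower bound in the proof of Theorem~\ref{MainThm} is only of this order. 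The argument still closes precisely because $x^{\sigma-\delta}\log^2 x = o\bigl(x^{\sigma}\exp(-c\sqrt{\log x})\bigr)$ for any fixed $\delta>0$; your overstated main term would falsely suggest that a much cruder remainder would suffice.

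One further correction to your narrative: the sign-concentration mechanism is not a pigeonhole or Weyl argument but the Fej\'er kernel identity of Proposition~\ref{Fejer}. The multiplicities $m(k,j)=k(j^3+1-k)$ are chosen so that after the approximation $1/\rho_{j,k}\approx 1/(ik\gamma_j)$ the factor $k$ cancels, leaving exactly the Fej\'er coefficients $(j^3+1-k)$ in the $k$-sum; this produces $F_{\gamma_j,j^3}(x)$, which is $\le -j^3/4$ off a set of measure $O(X/j^{3/2})$. The $\theta_j$ are then tuned so that the residual phase $e^{i\theta_j\log x}$ rotates $i(\overline{\chi}(a)-\overline{\chi}(b))$ (here simply $i\cdot e^{-i\xi}$) onto the positive real axis, which is what distinguishes the cases $\xi=0$ and $\xi=\pi$.
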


We omit the proof of Theorem \ref{Thm2}, as it is nearly identical to the proof of Theorem
\ref{MainThm} in the case $q=4$.

%%%%%%%%%%%%%%%%%%%%%%%%%%%%%%%%%%%%%%%%%%%%%%%%%%%%%%%%
%
\section{The construction of $\mathcal{B}$}\label{sec:construction}
%
%%%%%%%%%%%%%%%%%%%%%%%%%%%%%%%%%%%%%%%%%%%%%%%%%%%%%%%%%

For $j\ge 1$, suppose that
\begin{equation}\label{parameters}
\begin{aligned}
&\exp \(j^8\) \le \gamma_j \le 2 \exp\left(j^{8}\right), \quad
\left|\delta_j-\frac{1}{j^{8}}\right| \le \frac{1}{j^9},\\
 &\text{ and } \quad \left|\theta_j-\frac{\xi-\pi/2}{j^{16}}\right| \le \frac{1}{j^{17}}.\\
\end{aligned}
\end{equation}
We choose $j_0$ so large that for all $j\ge j_0$, $\g_j>A$ and $\sigma-\delta \le \sigma-\delta_j$.
Then we take $\mathcal{B}$ to be the union, over $j\geq j_0$ and $1\leq k\leq j^3$, of
$m(k,j)=k(j^3+1-k)$ copies of $\rho_{j,k}$, where
$$ \rho_{j,k}=\sigma-\delta_j+i(k\gamma_j+\theta_j).$$

%%%%%%%%%%%%%%%%%%%%%%%%%%%%%%%%%%%%%%%%%%%%%%%%%%%%%%%%%%%%%%%%%
%
\section{Preliminary Results}
%
%%%%%%%%%%%%%%%%%%%%%%%%%%%%%%%%%%%%%%%%%%%%%%%%%%%%%%%%%%%%%%%%%

The following classical-type explicit formula was established in
Lemma 1.1 of \cite{FK} when $x'=x$.  The slightly more general result below,
which is more convenient for us, is proved in exactly the same way.

\begin{lem}\label{ExplicitFormula} Let $\beta\geq 1/2$ and for each
 non-principal character $\chi \bmod q$, let $B(\chi)$ be the sequence of
zeros (duplicates allowed) of $L(s,\chi)$ with $\re(s)>\beta$ and $\im(s)>0$.
 Suppose further that all $L(s,\chi)$ are zero-free on the real segment $\b<s<1$.
 If $(a,q)=(b,q)=1$, $x$ is sufficiently large and $x'\ge x$, then
$$
\phi(q)\big(\pi(x;q,a)-\pi(x;q,b)\big)= -
2\re\left(\sum_{\substack{\chi\neq \chi_0 \\ \chi \bmod q}}
(\overline{\chi}(a)-\overline{\chi}(b))
\sum_{\substack{\rho\in B(\chi)\\|\im(\rho)|\leq x'}}f(\rho)\right)+ O\left(x^{\beta}\log^2 x\right),
$$
where
$$
f(\rho):=\frac{x^{\rho}}{\rho\log x}+\frac{1}{\rho}\int_2^x \frac{t^{\rho}}{t\log^2 t}dt=
\frac{x^{\rho}}{\rho\log x}+ O\left(\frac{x^{\re(\rho)}}{|\rho|^2\log^2 x}\right).
$$
\end{lem}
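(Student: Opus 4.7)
The plan is to derive this via the standard route: character orthogonality, the explicit formula for $\psi(x,\chi)$, and Abel summation, exactly as in Lemma 1.1 of \cite{FK}. First I would apply orthogonality to write
$$
\phi(q)\bigl(\pi(x;q,a)-\pi(x;q,b)\bigr)=\sum_{\chi\ne\chi_0}\bigl(\overline{\chi}(a)-\overline{\chi}(b)\bigr)\pi(x,\chi),
$$
where $\pi(x,\chi)=\sum_{p\le x}\chi(p)$; the principal character drops out since $\chi_0(a)=\chi_0(b)=1$. Abel summation then gives
$$
\pi(x,\chi)=\frac{\theta(x,\chi)}{\log x}+\int_2^x\frac{\theta(t,\chi)}{t\log^2 t}\,dt+O(1),
$$
with $\theta(t,\chi)=\sum_{p\le t}\chi(p)\log p$. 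Replacing $\theta(t,\chi)$ by $\psi(t,\chi)=\sum_{n\le t}\chi(n)\Lambda(n)$ costs $O(\sqrt{x}\log^2 x)$ from prime powers.

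Next I would insert the classical truncated explicit formula (see Ch.~19 of \cite{Da}) at height $x'\ge x$,
$$
\psi(t,\chi)=-\sum_{|\im\rho|\le x'}\frac{t^\rho}{\rho}+O\!\left(\frac{t\log^2(qtx')}{x'}+\log(qt)\right),
$$
where $\rho$ runs over nontrivial zeros of $L(s,\chi)$. Substituting this into the Abel identity and swapping the finite zero sum with the integral shows that each zero $\rho$ contributes exactly $-f(\rho)$, since
$$
\frac{x^\rho}{\rho\log x}+\int_2^x\frac{t^\rho}{\rho\,t\log^2 t}\,dt=f(\rho).
$$
I would then split the zero sum at $\re\rho=\beta$. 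The zero-free hypothesis on the segment $(\beta,1)$ guarantees no boundary zeros, and the zeros with $\re\rho\le\beta$ contribute at most $\sum_{|\im\rho|\le x'}x^\beta/(|\rho|\log x)$, which the standard density bound $\sum_{|\im\rho|\le T}|\rho|^{-1}\ll\log^2(qT)$ controls by $O(x^\beta\log^2 x)$. Together with the $O(\sqrt{x}\log^2 x)$ prime-power discrepancy and the truncation error (small thanks to $x'\ge x$), everything is absorbed into $O(x^\beta\log^2 x)$.

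Finally I would exploit the functional-equation symmetry $L(\rho,\chi)=0 \iff L(\bar\rho,\overline\chi)=0$ to restrict to zeros with $\im\rho>0$. Pairing $\chi$ with $\overline\chi$ in the outer sum and noting that $\overline{\chi}(a)-\overline{\chi}(b)$ is conjugated under $\chi\mapsto\overline{\chi}$, the zeros of $L(s,\chi)$ with $\im\rho<0$ produce the complex conjugate of the $B(\overline\chi)$-sum, so the total collapses into $-2\re$ of a sum indexed by $B(\chi)$, giving the claimed identity. The only real obstacle is the error bookkeeping in the second paragraph: one must confirm uniformly in $x'\ge x$ that the truncation error, the $\psi$--$\theta$ gap, and the contribution of low zeros all comfortably fit under $O(x^\beta\log^2 x)$; but each of these pieces is standard and $\beta\ge 1/2$ gives ample room.
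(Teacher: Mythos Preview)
Your outline is correct and is precisely the route the paper intends: it does not give an independent proof but simply cites Lemma~1.1 of \cite{FK} (the case $x'=x$) and asserts that the same argument goes through for $x'\ge x$, which is exactly the orthogonality\,/\,Abel summation\,/\,truncated explicit formula\,/\,zero-pairing scheme you describe. One small caveat on your final remark about uniformity: the low-zero and truncation contributions are of size $\ll x^{\beta}\log^2(qx')/\log x$, so the stated error $O(x^{\beta}\log^2 x)$ genuinely needs $\log x'\asymp\log x$ rather than an arbitrary $x'\ge x$; this is harmless here since the paper only applies the lemma with $x'\ll x(\log x)^{3/8}$.
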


{\bf Remark.}  For Theorem \ref{Thm2}, we use a similar explicit formula for $\pi(x)$ in terms of
the zeros $B(\zeta)$ of the Riemann zeta function which satisfy $\Re \rho > \beta$ and $\Im \rho>0$:
\[
 \pi(x) = \li(x) - 2 \Re \ssum{\rho \in \mathcal{B}(\zeta) \\ |\Im \rho| \le x'} f(\rho) +
O(x^\beta \log^2 x).
\]

Using properties of the Fej\'er kernel we prove the following key proposition.

\begin{pro}\label{Fejer}
Let $\gamma\geq 1$,  $L\ge 4$ and $X\ge 2$.  Define
$$F_{\gamma,L}(x)=\sum_{k=1}^{L-1}(L-k)\cos\left(k\gamma\log x\right).$$
Then
$$\m\left\{x\in [1,X]: F_{\gamma,L}(x)\geq -\frac{L}{4}\right\}\ll \frac{X}{\sqrt{L}}.$$
\end{pro}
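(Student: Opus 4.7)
\medskip

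\noindent\textbf{Proof plan.} The plan is to recognize $F_{\gamma,L}(x)$ as (essentially) the Fej\'er kernel evaluated at $\theta=\gamma\log x$. Squaring the geometric sum $\sum_{k=0}^{L-1}e^{ik\theta}=\frac{e^{iL\theta}-1}{e^{i\theta}-1}$ and taking real parts gives the identity
\[
L+2\sum_{k=1}^{L-1}(L-k)\cos(k\theta)=\left(\frac{\sin(L\theta/2)}{\sin(\theta/2)}\right)^{\!2},
\]
so with $\theta=\gamma\log x$ we obtain the closed form
\[
F_{\gamma,L}(x)=\frac12\left[\left(\frac{\sin(L\gamma\log x/2)}{\sin(\gamma\log x/2)}\right)^{\!2}-L\right].
\]

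From this identity, the condition $F_{\gamma,L}(x)\ge -L/4$ is equivalent to $\big(\sin(L\gamma\log x/2)/\sin(\gamma\log x/2)\big)^{2}\ge L/2$. Since the numerator is bounded by $1$ in absolute value, this forces
\[
\bigl|\sin(\tfrac{\gamma}{2}\log x)\bigr|\le \sqrt{2/L}.
\]
Thus it suffices to estimate the measure of the set $E=\{x\in[1,X]:|\sin(\gamma\log x/2)|\le \sqrt{2/L}\}$.

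The zeros of $\sin(\gamma\log x/2)$ occur at $x_k=e^{2k\pi/\gamma}$ for $k=0,1,\dots,K$ with $K=\lfloor \gamma\log X/(2\pi)\rfloor$. Near each zero, writing $x=x_k(1+h)$, one has $\sin(\gamma\log x/2)=(-1)^k\sin(\gamma\log(1+h)/2)$, so the local contribution to $E$ is an interval of length $\ll x_k\cdot \sqrt{1/L}/\gamma$ (the intervals don't overlap because neighbouring $x_k$ differ by a factor $e^{2\pi/\gamma}$, which is bounded away from $1$ when $\gamma\ge 1$). Summing the geometric series,
\[
\m(E)\ll \frac{1}{\gamma\sqrt{L}}\sum_{0\le k\le K}x_k \ll \frac{1}{\gamma\sqrt{L}}\cdot \frac{X\, e^{2\pi/\gamma}}{e^{2\pi/\gamma}-1}\ll \frac{X}{\sqrt{L}},
\]
where the last inequality uses $e^{2\pi/\gamma}/(e^{2\pi/\gamma}-1)\ll \max(1,\gamma)$ for $\gamma\ge 1$, which cancels the $1/\gamma$ factor.

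The only subtle point (and the only possible obstacle) is ensuring the bound is uniform in $\gamma\ge 1$: for large $\gamma$ the zeros $x_k$ are densely packed and each local interval is short, but there are many of them; for $\gamma$ close to $1$ the intervals are long but very few. The factor $e^{2\pi/\gamma}/(e^{2\pi/\gamma}-1)$ handles both regimes simultaneously, and the two $\gamma$-dependencies cancel, leaving the clean bound $X/\sqrt{L}$.
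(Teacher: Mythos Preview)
Your proposal is correct and follows essentially the same approach as the paper: both use the Fej\'er kernel identity to rewrite $F_{\gamma,L}(x)+L/2$ as a nonnegative quotient of squared sines, reduce the condition $F_{\gamma,L}(x)\ge -L/4$ to $|\sin(\tfrac{\gamma}{2}\log x)|\le\sqrt{2/L}$, and then sum the lengths of the resulting intervals around $x_k=e^{2k\pi/\gamma}$ as a geometric series. One small correction: your parenthetical claim that the intervals do not overlap because $e^{2\pi/\gamma}$ is ``bounded away from $1$ when $\gamma\ge 1$'' is false for large $\gamma$, but this is harmless since for an upper bound on the measure you may simply sum the interval lengths regardless of overlap (as the paper does).
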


\begin{proof}
The Fej\'er kernel satisfies the following identity
$$ \frac1L\left(\frac{\sin\left(\frac{L\theta}{2}\right)}{\sin\left(\frac{\theta}{2}\right)}\right)^2= 1+2\sum_{k=1}^{L-1}\left(1-\frac{k}{L}\right)\cos(k\theta).$$
This yields
$$ F_{\gamma,L}(x)=\frac{\sin^2\left(\frac{L\gamma\log x}{2}\right)}
{2\sin^2\left(\frac{\gamma\log x}{2}\right)}-\frac{L}{2}.$$
Therefore, if $F_{\gamma,L}(x)\ge -L/4$ then
$$ \sin^2\left(\frac{\gamma\log x}{2}\right) \le
\frac{2}{L}\sin^2\left(\frac{L\gamma\log x}{2}\right)\le \frac{2}{L}.$$
Hence,
$$ \left \| \frac{\gamma \log x}{2\pi}\right\|\le \eps :=
 \frac{1}{\sqrt{2L}},$$
where $\|t\|$ denotes the distance to the nearest integer. This implies
\bal
\m\left\{x\in [1,X]: F_{\gamma,L}(x)\geq -\frac{L}{4}\right\}
&\le \m\left\{x\in [1,X]: \quad \left\| \frac{\gamma \log x}{2\pi} \right\| \le \eps\right\} \\
&\le \sum_{0\le k\le \frac{\g \log X}{2\pi}+\eps} e^{2\pi(k+\eps)/\g}-e^{2\pi(k-\eps)/\g}\\
&\ll \frac{\eps}{\g} \sum_{0\le k\le \frac{\g \log X}{2\pi}+\eps} e^{2\pi(k+\eps)/\g}
 \ll \eps X.
\hfill\qedhere
\eal
\end{proof}

%%%%%%%%%%%%%%%%%%%%%%%%%%%%%%%%%%%%%%%%%%%%%%%%%%%%%%%%
%
\section{Proof of Theorem \ref{MainThm}}
%
%%%%%%%%%%%%%%%%%%%%%%%%%%%%%%%%%%%%%%%%%%%%%%%%%%%%%%%%%

Suppose $X$ is large and $\sqrt{X}\leq x\leq X$.  For brevity, let
\[
 \Delta = \phi(q)\big(\pi(x;q,a)-\pi(x;q,b)\big).
\]
 It follows from Lemma \ref{ExplicitFormula} with $x'=\max(x,\max \{ j^3\g_j : \g_j\le x\})$
 that
\begin{equation}\label{difference}
\begin{aligned}
\Delta= & -\frac{2}{\log x}\re\left((\overline{\chi}(a)-\overline{\chi}(b))
\sum_{\gamma_j\leq x}\sum_{k=1}^{j^3}
  \frac{x^{\sigma-\delta_j+i(k\gamma_j+\theta_j)}m(k,j)}{\sigma-\delta_j+i(k\gamma_j+\theta_j)}\right)\\
&\qquad + O\(\frac{x^{\sigma}}{\log^2x}\sum_{\gamma_j\leq x}\frac{x^{-\delta_j}}{\g_j^2}
  \sum_{k=1}^{j^3} \frac{m(k,j)}{k^2}   +x^{\sigma-\delta}\log^2 x\)\\
&= \frac{2x^{\sigma}}{\log x}\re\left(i(\overline{\chi}(a)-\overline{\chi}(b))\sum_{\gamma_j\leq x}\frac{x^{-\delta_j}}{\gamma_j}\sum_{k=1}^{j^3}
  x^{i(k\gamma_j+\theta_j)}(j^3+1-k)\right)\\
&\qquad + O\left(\frac{x^{\sigma}}{\log x}\sum_{\gamma_j\leq x}\frac{j^{4}x^{-\delta_j}}{\gamma_j^2}+x^{\sigma-\delta}\log^2 x\right).\\
\end{aligned}
\end{equation}
Note that
$$ \frac{x^{-\delta_j}}{\gamma_j}=\exp\left(-\frac{\log x}{j^{8}}\left(1+O\left(\frac{1}{j}\right)\right)-j^8+O(1)\right).$$
The maximum of this function over $j$ occurs around $J=J(x):=\left[(\log x)^{1/16}\right].$
In this case we have $\log x= J^{16}(1+O(1/J))$ so that
\begin{equation}\label{maximum}
\frac{x^{-\delta_J}}{\gamma_J}= \exp\left(-2J^8+O\left(J^7\right)\right)=\exp\left(-2(\log x)^{1/2}
  + O((\log x)^{7/16})\right).
\end{equation}
We will prove that most of the contribution to the main term on the right hand side of \eqref{difference} comes for the $j$'s in the range $J-J^{3/4}\leq j\leq J+J^{3/4}.$
First, if $j\geq 3J/2$ or $j\leq J/2$ then
$$ \frac{x^{-\delta_j}}{\gamma_j}\ll \exp\left(-4J^8\right)\ll \exp\left(-(\log x)^{1/2}\right)\frac{x^{-\delta_J}}{\gamma_J}.$$
Now suppose that $J/2< j< J-J^{3/4}$ or $J+J^{3/4}<j<3J/2$. Write $j=J+r$ with $J^{3/4}<|r|<J/2$.
For $x>0$, $x+1/x = 2+ (x-1)^2/x$, hence
\[
 \(1+\frac{r}{J}\)^8+\(1+\frac{r}{J}\)^{-8} \ge  \(1+\left|\frac{r}{J}\right|\)^8+\(1+\left|\frac{r}{J}\right|\)^{-8}
\ge 2 + \frac{(8r/J)^2}{1+8r/J} \ge 2 + 12 (r/J)^2.
\]
We infer from \eqref{maximum} that
\begin{equation*}
\begin{aligned}
\frac{x^{-\delta_j}}{\gamma_j}&= \exp\left(-\frac{J^{16}}{j^{8}}\left(1+O\left(\frac{1}{J}\right)\right)-j^8\right)\\
&=\exp\left(-J^{8}\left(\left(1+\frac{r}{J}\right)^8+\left(1+\frac{r}{J}\right)^{-8}\right)+O(J^7)\right)\\
&\leq \exp\left(-2J^{8}\left(1+\frac{6}{\sqrt{J}}\right)+O(J^7)\right)\\
&\ll \exp\left(-2(\log x)^{1/3}\right)\frac{x^{-\delta_J}}{\gamma_J}.\\
\end{aligned}
\end{equation*}
Since $\gamma_j\leq x$ implies that $j\ll (\log x)^{1/8}$, the contribution of the terms $1\leq j< J-J^{3/4}$ or $J+J^{3/4}<j$ to the main term of \eqref{difference} is
\begin{equation}\label{deviation}
\ll \exp\left(-2(\log x)^{1/3}\right)\frac{x^{\sigma-\delta_J}}{\gamma_J}\sum_{j\leq (\log x)^{1/4}}\sum_{k=1}^{j^3}(j^3+1-k)\ll \exp\left(-(\log x)^{1/3}\right)\frac{x^{\sigma-\delta_J}}{\gamma_J}.
\end{equation}
Similarly, we have
\begin{align*}
 \frac{x^{-\delta_j}}{\gamma_j^2}&=\exp\left(-\frac{\log x}{j^{8}}\left(1+O\left(\frac{1}{j}\right)\right)-2j^8+O(1)\right)\\
 &\ll \exp\left(-2\sqrt{2}(\log x)^{1/2}(1+o(1))\right)\\
 &\ll \exp\left(-2(\log x)^{1/3}\right)\frac{x^{-\delta_J}}{\gamma_J},\\
\end{align*}
which follows from \eqref{maximum} along with the fact that the maximum of
  $f(t)=-\log x/t^{8}-2t^8$ occurs at $t=(\log x/2)^{1/16}$. Hence, using \eqref{maximum},
 the contribution of the error term of \eqref{difference} is
\begin{equation}\label{error}
\begin{aligned}
\ll \exp\left(-2(\log x)^{1/3}\right)\frac{x^{\sigma-\delta_J}}{\gamma_J}\sum_{j\leq (\log x)^{1/4}}j^{4}+x^{\sigma-\delta}\log^2 x\ll \exp\left(-(\log x)^{1/3}\right)\frac{x^{\sigma-\delta_J}}{\gamma_J}.
\end{aligned}
\end{equation}

Therefore, inserting the bounds \eqref{deviation} and \eqref{error} in \eqref{difference} we deduce that
\begin{equation}\label{difference2}
\begin{aligned}
\Delta&=\frac{2x^{\sigma}}{\log x}\re\left(i(\overline{\chi}(a)-\overline{\chi}(b))\sum_{|j-J|\le J^{3/4}}\frac{x^{-\delta_j}}{\gamma_j}\sum_{k=1}^{j^3}
\exp\left(i(k\gamma_j+\theta_j)\log x\right)(j^3+1-k)\right)\\
&\qquad + O\left(\exp\left(-(\log x)^{1/3}\right)\frac{x^{\sigma-\delta_J}}{\gamma_J}\right).\\
\end{aligned}
\end{equation}

Let $J-J^{3/4}\leq j\leq J+J^{3/4}$. Then $j^{16}= J^{16}\left(1+O(J^{-1/4})\right).$ Hence we get
\begin{align*}
\theta_j\log x&= \(\arg(\chi(a)-\chi(b))-\frac{\pi}{2}\)\frac{\log x}{j^{16}} + O\pfrac{\log x}{j^{17}} \\
&= \(\arg(\chi(a)-\chi(b))-\frac{\pi}{2}\) + O\pfrac{1}{J^{1/4}}.
\end{align*}
This implies
$$i(\overline{\chi}(a)-\overline{\chi}(b))\exp\left(i\theta_j\log x\right)= |\chi(a)-\chi(b)|\left(1+ O\left(\frac{1}{J^{1/4}}\right)\right),$$
since $e^{i\arg z}=z/|z|.$ Inserting this estimate in \eqref{difference2} we obtain
\begin{equation}\label{difference3}
\begin{aligned}
\Delta=& \left(1+O\left(\frac{1}{\log^{1/64} x}\right)\right)2|\chi(a)-\chi(b)|
\sum_{|j-J|\le J^{3/4}}\frac{x^{\sigma-\delta_j}}{\gamma_j\log x} F_{\g_j,j^3}(x)\\
&+ O\left(\exp\left(-(\log x)^{1/3}\right)\frac{x^{\sigma-\delta_J}}{\gamma_J}\right).\\
\end{aligned}
\end{equation}
For $x\in [\sqrt{X}, X]$ we have $\frac14(\log X)^{1/16}\leq J-J^{3/4}$ and $J+J^{3/4} \leq 4(\log X)^{1/16}$ if $X$ is sufficiently large, since $J=(\log x)^{1/16}+O(1).$
We define
$$\Omega:= \left\{x\in [\sqrt{X},X]: F_{\g_j,j^3}(x)\leq -\frac{j^3}{4} \text{ for all }  \frac14(\log X)^{1/16}\leq j\leq 4(\log X)^{1/16}\right\}.$$
Then it follows from Proposition \ref{Fejer} that
\begin{equation}\label{measure}
\begin{aligned}
 \m \Omega &= X+O\left(X \sum_{\frac14 (\log X)^{1/16}\leq j\leq 4(\log X)^{1/16}}\frac{1}{j^{3/2}}+\sqrt{X}\right)\\
 &= X\left(1+O\((\log X)^{-1/32}\) \).\\
 \end{aligned}
\end{equation}
Furthermore, if $x\in \Omega$ then we infer from \eqref{difference3} that
\begin{align*}
\Delta &\le -\frac{1}{3}|\chi(a)-\chi(b)|\sum_{|j-J|\le J^{3/4}}\frac{j^3x^{\sigma-\delta_j}}{\gamma_j\log x}
+ O\left(\exp\left(-(\log x)^{1/3}\right)\frac{x^{\sigma-\delta_J}}{\gamma_J}\right).\\
&\le -\frac{1}{3}|\chi(a)-\chi(b)|\frac{J^3x^{\sigma-\delta_J}}{\gamma_J\log x}\left(1+ o(1)\right)<0
\end{align*}
if $X$ is sufficiently large, which completes the proof.

%%%%%%%%%%%%%%%%%%%%%%%%%%%%%%%%%%%%%%%%%%%%%%%%%%%%%%%%
%
\section{Acknowledgement}
%
%%%%%%%%%%%%%%%%%%%%%%%%%%%%%%%%%%%%%%%%%%%%%%%%%%%%%%%%%

The research of K. F. was partially supported by
National Science Foundation grant DMS-0901339.
The research of S.~K. was partially supported
by Russian Fund for Basic Research, Grant N.~11-01-00329.
The research of Y. L. was supported by a Postdoctoral Fellowship
 from the Natural Sciences and Engineering Research Council of Canada.

\end{document}